\newtheorem{theorem}{Theorem}
\newtheorem{remark}{Remark}
\newtheorem{proposition}{Proposition}
\newtheorem*{condition}{Condition}
\newenvironment{definition}{{\bf Definition.}}{}
\DeclareSymbolFont{cyrillic}{T2A}{cmr}{m}{n}
\DeclareMathSymbol{\D}{\mathalpha}{cyrillic}{196}
\def\S{{\mathcal S}}
\def\ie{{\em i.e.,} }
\newfont\bbf{msbm10 at 12pt}
\def\eps{\varepsilon}
\def\phi{\varphi}
\def\R{{\mathbb R}}
\def\N{{\mathbb N}}
\def\B{{\mathcal B}}
\def\M{\ensuremath{\mathcal M}}
\def\I{{\mathbbm{1}}}
\def\P{{\mathcal P}}
\def\p{\ensuremath{\mathbb P}}
\def\A{\ensuremath{A^{(q)}}}
\def\X{{\mathcal X}}
\def\sm{\setminus}
\def\nn{\ensuremath{\mathscr N}}
\def\RR{\ensuremath{\mathcal R}}
\def\cyl{Z}
\def\le{\leqslant}
\def\ge{\geqslant}
\newcommand{\z}{\zeta}
  \def\a{\alpha} 
\def\e{\ensuremath{\text{e}}}
\def\bH{\bar H}
\begin{document}

\title{Rare Events for the Manneville-Pomeau map}

\author[A. C. M. Freitas]{Ana Cristina Moreira Freitas}
\address{Ana Cristina Moreira Freitas\\ Centro de Matem\'{a}tica \&
Faculdade de Economia da Universidade do Porto\\ Rua Dr. Roberto Frias \\
4200-464 Porto\\ Portugal} \email{amoreira@fep.up.pt}
\urladdr{http://www.fep.up.pt/docentes/amoreira/}

\author[J. M. Freitas]{Jorge Milhazes Freitas}
\address{Jorge Milhazes Freitas\\ Centro de Matem\'{a}tica da Universidade do Porto\\ Rua do
Campo Alegre 687\\ 4169-007 Porto\\ Portugal}
\email{jmfreita@fc.up.pt}
\urladdr{http://www.fc.up.pt/pessoas/jmfreita}

\author[M. Todd]{Mike Todd}
\address{Mike Todd\\ Mathematical Institute\\
University of St Andrews\\
North Haugh\\
St Andrews\\
KY16 9SS\\
Scotland \\} \email{m.todd@st-andrews.ac.uk }
\urladdr{http://www.mcs.st-and.ac.uk/~miket/}

\author{ Sandro Vaienti}
\address{Sandro Vaienti\\
Aix Marseille Universit\'e, CNRS, CPT, UMR 7332, 13288 Marseille, France and Universit\'e de Toulon, CNRS, CPT, UMR
7332, 83957 La Garde, France} 
\email {vaienti@cpt.univ-mrs.fr}
\urladdr{http://www.cpt.univ-mrs.fr/~vaienti/}

\thanks{ACMF was partially supported by FCT grant SFRH/BPD/66174/2009. JMF was partially supported by FCT grant SFRH/BPD/66040/2009. Both these grants are financially supported by the program POPH/FSE. ACMF, JMF are partially  supported by FCT (Portugal) project FAPESP/19805/2014. All authors are partially  supported by FCT (Portugal) projects PTDC/MAT/120346/2010 and PTDC/ MAT-CAL/3884/2014. All these projects are funded by national and European structural funds through the programs  FEDER and COMPETE. ACMF, JMF and MT are also supported by CMUP (UID/MAT/00144/2013), which is funded by FCT (Portugal) with national (MEC) and European structural funds through the programs FEDER, under the partnership agreement PT2020. SV has been supported by the ANR grant PERTURBATIONS and by the project MOD TER COM of the french Region PACA}

\subjclass[2000]{37A50, 60G55, 60G70, 37B20, 60G10, 37C25.}


\begin{abstract}
We prove a dichotomy for Manneville-Pomeau maps $f:[0,1]\to [0, 1]$:  given any point $\z\in [0,1]$, either the Rare Events Point Processes (REPP), counting the number of exceedances, which correspond to entrances in balls around $\z$, converge in distribution to a Poisson process; or the point $\z$ is periodic and the REPP converge in distribution to a compound Poisson process.  Our method is to use inducing techniques for all points except 0 and its preimages, extending a recent result \cite{HayWinZwe14}, and then to deal with the remaining points separately. The preimages of 0 are dealt with applying recent results in \cite{AFV15}. The point $\zeta=0$ is studied separately because the tangency with the identity map at this point creates too much dependence, which causes severe clustering of exceedances. The Extremal Index, which measures the intensity of clustering, is equal to 0 at $\zeta=0$, which ultimately leads to a degenerate limit distribution for the partial maxima of stochastic processes arising from the dynamics and for the usual normalising sequences. We prove that using adapted normalising sequences we can still obtain non-degenerate limit distributions at $\zeta=0$.  
\end{abstract}

\date{\today}

\maketitle

\section{Introduction}

One of the standard ways to investigate the statistical properties of a dynamical system $f:\X\to \X$ with respect to some measure $\p$ is to look at its recurrence to certain points $\z$ in the system.  This can be connected to Extreme Value theory: supposing that $\phi:\X\to \R$ is an observable taking its unique maximum $u_\phi$ at $\z$, one can look at the behaviour of the iterates $x, f(x), f^2(x), \ldots$ via the observations
\begin{equation}
\label{eq:def-SP}
X_i=X_i(x)=\phi\circ f^n(x).
\end{equation}
If $\p$ is an $f$-invariant probability measure then $X_0, X_1, \ldots$ is a stationary stochastic process.  We furthermore assume that $\p$ is ergodic in order to isolate specific statistical behaviour.  So Birkhoff's Ergodic Theorem implies that these random variables will satisfy the law of large numbers.
We can now consider the random variable given by the maximum of this process:
$$M_n=M_n(x):=\max\{X_0(x), \ldots, X_{n-1}(x)\}.$$
Again by the ergodic theorem, if $\z$ is in the support of $\p$ then for any small ball around $\zeta$, $f^n(x)$ must hit the ball for some $n$ and typical $x$.  Hence if $\phi$ is sufficiently regular, then we expect $M_n\to u_\phi$.  Therefore, to obtain a non-trivial limit law, we need to rescale $\{M_n\}_n$.
Indeed, we say that we have an \emph{Extreme Value Law} (EVL) for $M_n$ if there is a non-degenerate distribution function $H:\R\to [0,1]$ with $H(0)=0$ and a sequence of levels $u_n=u_n(\tau)$ such that
\begin{equation}
\label{eq:un}n\p(X_0>u_n)\to \tau \text{ as } n\to \infty,\end{equation}
and for which the following holds:
$$\p(M_n\le u_n) \to \bH(\tau) = 1-H(\tau) \text{ as } n\to \infty,$$
where the convergence is meant at the continuity points of $H(\tau)$.  

In recent years, there has been a great deal of work on EVLs in the context of dynamical systems (see for example
\cite{C01, FF08, VHF09, FreFreTod10, GHN11, HNT12, LFW12, K12, FHN14, AFV15}),
the standard form of the observable $\phi$ being a function of the distance to $\z$, for example $\phi(x)=-\log d(x, \z)$ for $d$ a metric on $\X$.  Note that instead of the $\log$, different functions can be composed with the distance  (see \cite[page 679]{FreFreTod10}); moreover, $\phi$ need not depend on the distance to $\z$, see \cite[Section~5]{FFT11}. However, for the purposes of this paper, as in \cite{FreFreTod10}, we assume that $\phi$ is a function of the distance to $\zeta$ and is invertible in a vicinity of $\zeta$, so that $\varphi^{-1}(u_n)$ is well defined for $u_n$ sufficiently close to $\varphi(\zeta)$, which can possibly be infinite (see Section~\ref{ssec:shapes} for further comments on this).

In many cases it has been shown that for $\p$-a.e. $\z\in \X$, this setup gives an EVL with $\bH=e^{-\tau}$.  More recently it has been shown that if $\z$ is a periodic point of period $p$ then $\bH=e^{-\theta\tau}$ where $\theta\in (0,1)$ depends on the Jacobean of the measure for $f^p$, and is referred to as the \emph{Extremal Index} (EI). The EI is known to measure the intensity of clustering of exceedances of the levels $u_n$. In fact, in many cases, the EI is equal to the inverse of the average cluster size, so that the EI is equal to 1 when there is no clustering. In the case of a class of uniformly hyperbolic dynamical systems, a stronger property, a dichotomy, has been shown: either $\z$ is periodic and we have an EVL with some extremal index $\theta\in (0,1)$, or there is an EVL $\bH=e^{-\tau}$.  This was shown for $f$ some uniformly expanding interval map with a finite number of branches in \cite{FerPol12} (see also \cite[Section 6]{FreFreTod12}) and with a countable number of branches in \cite{AFV15}; here, depending on the precise form of the map, the measure can be absolutely continuous with respect to Lebesgue (acip), or an equilibrium state for some H\"older potential.  Inducing methods have been used to extend some of these results to non-uniformly hyperbolic dynamical systems (see \cite{FreFreTod13} which built on \cite{BruSauTroVai03}), but the results have not thus far extended to such a complete dichotomy.

We note that from a more probabilistic direction, such as processes under certain mixing conditions, which can be related to some symbolic dynamical systems, there is related work on Hitting Time Statistics, which as in \cite{FreFreTod10} can be seen as Extreme Value Laws.  For a review on early work in this direction see \cite{AbaGal01}.  For recent work, including some analysis of \emph{all} points, including periodic ones, see \cite{AbaVer09, AbaSau11}.

We can further enrich our process by considering the point process formed by entries into the regions $\{X>u_n\}$, which in good cases gives rise to a Poisson process.  An analogous dichotomy can often be shown there also: in the case of a periodic point $\z$, we obtain a compound Poisson process.  We leave the details of this construction to later.

In this note, we extend the dichotomy to a simple non-uniformly hyperbolic dynamical system, the \emph{Manneville-Pomeau} (MP) map equipped with an absolutely continuous invariant probability measure.  The particular form of these maps we will use is, for $\alpha>0$,
\begin{equation*}
f=f_\alpha(x)=\begin{cases} x(1+2^\alpha x^\alpha) & \text{ for } x\in [0, 1/2)\\
2x-1 & \text{ for } x\in [1/2, 1]\end{cases}
\end{equation*}
Members of this family of maps are often referred to as Liverani-Saussol-Vaienti maps since they were defined in \cite{LSV99}.  If $\alpha\in (0,1)$ then there is an acip $\mu_\alpha$: we will restrict our attention to this case.
As can be seen for example in \cite{LSV99,Y99,H04}, for each $\alpha\in (0,1)$, the system $([0,1], f_\alpha, \mu_\alpha)$ has polynomial decay of correlations.  That is, letting $\mathcal H_\beta$ denote the space of H\"older continuous functions $\phi$ with exponent $\beta$ equipped with the norm $\|\phi\|_{\mathcal H_\beta}=\|\phi\|_\infty+|\phi|_{\mathcal H_\beta}$, where $$|\phi|_{\mathcal H_\beta}=\sup_{x\neq y}\frac{|\phi(x)-\phi(y)|}{|x-y|^\beta},$$ 
 there exists $C>0$ such that for each $\phi\in \mathcal H_\beta$, $\psi\in L^\infty$ and all $t\in \N$,
\begin{equation}
\label{eq:Holder-DC}
\left| \int\phi\cdot(\psi\circ f^t)d\mu_\alpha-\int\phi d\mu_\alpha\int\psi
d\mu_\alpha\right|\leq C\|\phi\|_{\mathcal H_\beta}\|\psi\|_\infty \frac{1}{t^{\frac1\alpha-1}}.
\end{equation}
Let $h_\alpha=\frac{d\mu_\alpha}{dx}$. From \cite{H04} we know that $h_\alpha\in L^{1+\epsilon}$, with $\epsilon<1/\alpha-1$, and moreover $\lim_{x\to0}\frac{h(x)}{x^{-\alpha}}=C_0>0$. Hence, for small $s>0$ we have that 
\begin{equation}
\label{eq:estimate-measure}
\mu_\alpha([0,s))\sim_c s^{1-\alpha},
\end{equation}
where the notation $\sim_{c}$ is used in the sense that there is $c>0$ such that $\lim_{s\to\infty}\frac{\mu_\alpha([0,s))}{s^{1-\alpha}}=c$.

In this case there are canonical induced maps which capture all but a countable number of points in the phase space, so with some extra consideration for those points not captured, we can prove the full dichotomy, where for $\z$ a periodic point of period $p$, the extremal index is $\theta=1-1/|Df^p(\z)|$.  

For the special case in which $\z$ is the indifferent fixed point, we prove that there exists an EI equal to zero, which corresponds to a degenerate limit law, when the usual normalising sequences are used.  Moreover, for a particular range of $\alpha$ we show that by changing the definition of $(u_n)_n$ given by \eqref{eq:un} in a suitable way, we recover a non-degenerate EVL.  This latter result relies on information on the transfer operator in \cite{HirSauVai99} as well as a refinement of the techniques for proving EVLs at periodic points developed in \cite{FFT15}.

The results that we present here for the Liverani-Saussol-Vaienti maps should be extendable to more general models of intermittent maps with neutral fixed (or periodic) points as long as they admit a first return time induced map which falls in the category of the uniformly expanding maps studied by Rychlik in \cite{R83}. We chose not to treat more general models because we believe that in this way the ideas are presented in a much easier way, without an unnecessary overload of notation and length.

\subsection{Point process of hitting times}   We will use our observations on our dynamical system to generate point processes. Here we adopt the approach and notation of \cite{Zwe07a}.
Let $\M_p([0,\infty))$ be the space of counting measures on $([0,\infty), \B_{[0,\infty)})$.  We equip this space with the vague topology, i.e., $\nu_n\to \nu$ in $\M_p([0,\infty))$ whenever $\nu_n(\psi)\to \nu(\psi)$ for any continuous function $\psi:[0,\infty)\to \R$ with compact support.  A \emph{point process} $N$ on $[0,\infty)$ is a random element of $\M_p([0,\infty))$.  We will be interested in point processes $N_n:X\to \M_p([0,\infty))$.  If we have a fixed measure $\mu$ on $X$, we say that $(N_n)_n$ \emph{converges in distribution} to $N$ if $\mu\circ N_n^{-1}$ converges weakly to $\mu\circ N^{-1}$.  We write $N_n \stackrel{\mu}{\Longrightarrow} N $.

So given $X_0, X_1, X_2, \ldots$ and some $u\in \R$, we begin the construction of our point process $\R\to \M_p([0,\infty))$ as follows.  Given   $A\subset\R$  we define 
\[
\nn_u(A):=\sum_{i\in A\cap\N_0}\I_{X_i>u}.
\]
So $\nn_{u}[0,n)$ counts the number of exceedances of the parameter $u$ among the first $n$ observations of the process $X_0,X_1,\ldots,X_{n-1}$ or, in other words, the number of entrances in $U(u):=\{X_0>u\}$ up to time $n$.

We next re-scale time using the factor $v:=1/\p(X>u)$ suggested by Kac's Theorem. However, before we give the definition, we need some formalism. Let $\S$ denote the semi-ring of subsets of  $\R_0^+$ whose elements
are intervals of the type $[a,b)$, for $a,b\in\R_0^+$. Let $\RR$
denote the ring generated by $\S$. Recall that for every $J\in\RR$
there are $k\in\N$ and $k$ intervals $I_1,\ldots,I_k\in\S$ such that
$J=\cup_{i=1}^k I_j$. In order to fix notation, let
$a_j,b_j\in\R_0^+$ be such that $I_j=[a_j,b_j)\in\S$. For
$I=[a,b)\in\S$ and $\alpha\in \R$, we denote $\alpha I:=[\alpha
a,\alpha b)$ and $I+\alpha:=[a+\alpha,b+\alpha)$. Similarly, for
$J\in\RR$ define $\alpha J:=\alpha I_1\cup\cdots\cup \alpha I_k$ and
$J+\alpha:=(I_1+\alpha)\cup\cdots\cup (I_k+\alpha)$.

We suppose that $\tau>0$ and $(u_n)_{n}$ is defined so that \eqref{eq:un} holds.  We let $U(u_n)=\{X_0>u_n\}$ and let $v_n$ be the corresponding scaling factor defined above.

\begin{definition}
We define the \emph{rare event point process} (REPP) by
counting the number of exceedances (or hits to $U(u_n)$) during the (re-scaled) time period $v_nJ\in\RR$, where $J\in\RR$. To be more precise, for every $J\in\RR$, set
\begin{equation*}
\label{eq:def-REPP} N_n(J):=\nn_{u_n}(v_nJ)=\sum_{j\in v_nJ\cap\N_0}\I_{X_j>u_n}.
\end{equation*}
\end{definition}

As will see below the REPP just defined converges in distribution to either to standard Poisson process or to a compound Poisson process $N$ with intensity $\theta$ and a geometric multiplicity d.f. For completeness, we define here what we mean by a compound Poisson process. (See \cite{K86} for more details).

\begin{definition}
\label{def:compound-poisson-process}
Let $T_1, T_2,\ldots$ be  an i.i.d. sequence of random variables with common exponential distribution of mean $1/\theta$. Let  $D_1, D_2, \ldots$ be another i.i.d. sequence of random variables, independent of the previous one, and with d.f. $\pi$. Given these sequences, for $J\in\RR$, set
$$
N(J)=\int \I_J\;d\left(\sum_{i=1}^\infty D_i \delta_{T_1+\ldots+T_i}\right),
$$ 
where $\delta_t$ denotes the Dirac measure at $t>0$.  Whenever we are in this setting, we say that $N$ is a compound Poisson process of intensity $\theta$ and multiplicity d.f. $\pi$.
\end{definition}
\begin{remark}
\label{rem:poisson-process}
In this paper, the multiplicity will always be integer valued which means that $\pi$ is completely defined by the values $\pi_k=\p(D_1=k)$, for every $k\in\N_0$. Note that, if $\theta=1$ and $\pi_1=1$, then $N$ is the homogenous  standard Poisson process and, for every $t>0$, the random variable $N([0,t))$ has a Poisson distribution of mean $t$. 
\end{remark}

\begin{remark}
\label{rem:compound-poisson}
At periodic points we will see that $\pi$ is actually a geometric distribution of parameter $\theta\in (0,1]$,  \ie $\pi_k=\theta(1-\theta)^{k-1}$, for every $k\in\N_0$. This means that, as in \cite{HV09}, here, the random variable $N([0,t))$ follows a P\'olya-Aeppli distribution, \emph{i.e.}:
$$
\p(N([0,t))=k)=\e^{-\theta t}\sum_{j=1}^k \theta^j(1-\theta)^{k-j}\frac{(\theta t)^j}{j!}\binom{k-1}{j-1},
$$
for all $k\in\N$ and $\p(N([0,t))=0)=\e^{-\theta t}$. 
\end{remark}

\begin{remark}
\label{rem:convergence-point-processes}
By \cite[Theorem~4.2]{K86}, the sequence of point processes $(N_n)_{n\in\N}$ converges in distribution to the point process $N$ iff the sequence of vector r.v.  $(N_n(I_1), \ldots, N_n(I_k))$ converges in distribution to $(N(I_1), \ldots, N(I_k))$, for every $k\in\N$ and all $I_1,\ldots, I_k\in\S$ such that $N(\partial I_i)=0$ a.s., for $i=1,\ldots,k$.
\end{remark}

A few examples of studies of REPPs in a dynamical context can be found in \cite{DenGorSha04, HirSauVai99, FreFreTod10, ChaCol13, FHN14}, and in a more probabilistic context in \cite{P91, AbaVer08, KifRap14}.

\subsection{Main results}

Let $X_0, X_1, \ldots$ be as in \eqref{eq:def-SP}, with $\varphi$ as specified above.

\begin{theorem}  Given $\z\in (0,1]$, consider the REPP $N_n$ defined above.  Then either
\begin{enumerate}
\item[(a)] $\z$ is not periodic and $N_n$  converges in distribution to a homogeneous Poisson process $N$ with intensity 1.
\item[(b)] $\z$ is periodic with period $p$ and $N_n$ converges in distribution to a compound Poisson process $N$ with intensity $\theta=1-\left| D(f^{-p})(\z)\right|$ and multiplicity distribution function $\pi$ given by
$
\pi_\kappa=\theta(1-\theta)^{\kappa-1},
$
for every $\kappa\in\N_0$.
\end{enumerate}
\label{thm:main}
\end{theorem}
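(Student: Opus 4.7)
The plan is to split $\zeta \in (0,1]$ into two classes: (A) $\zeta$ is not a preimage of the indifferent fixed point $0$, and (B) $\zeta$ is a (necessarily non-zero) preimage of $0$. Since $0$ is fixed, every $\zeta$ in class (B) is automatically non-periodic, whereas class (A) contains all periodic $\zeta$.

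For class (A), I would induce to obtain uniform expansion. Take $Y := [1/2,1]$ and consider the first-return map $F := f^R : Y \to Y$, with return time $R(x) = \inf\{n \ge 1 : f^n(x) \in Y\}$. On the countable Markov partition $\{Y_n := \{R = n\}\}$ the map $F$ is piecewise uniformly expanding with bounded distortion, and the normalized restriction $\hat\mu_\alpha := \mu_\alpha|_Y / \mu_\alpha(Y)$ is $F$-invariant with exponential decay of correlations. Since $\zeta$ is not a preimage of $0$, either $\zeta \in Y$ or its first entry $f^k(\zeta)$ into $Y$ lies in the interior of some $Y_n$. In both cases the dichotomy of \cite{AFV14} for countable-branch piecewise expanding maps applies to $(F,\hat\mu_\alpha)$ at (the relevant iterate of) $\zeta$, yielding either Poisson convergence of the induced REPP or compound Poisson convergence with extremal index $1 - |DF^{p'}(\zeta)|^{-1}$ when $\zeta$ is $F$-periodic of period $p'$. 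The relation $\sum_{j=0}^{p'-1} R(F^j\zeta) = p$ together with the chain rule gives $DF^{p'}(\zeta) = Df^p(\zeta)$, so this extremal index equals $\theta = 1 - |Df^p(\zeta)|^{-1} = 1 - |\det Df^{-p}(\zeta)|$, matching the desired formula.

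To transfer the induced limit back to $f$, I would apply (a mild extension of) the inducing theorem of \cite{HayWinZwe14}, whose hypotheses are verified using the polynomial decay of correlations \eqref{eq:Holder-DC} and the tail estimate \eqref{eq:estimate-measure}; the corresponding time rescaling is Kac's constant and matches the normalization \eqref{eq:un}. The compound Poisson structure together with its geometric multiplicity distribution is preserved under this procedure, completing class (A).

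The main obstacle is class (B). When $\zeta$ is a preimage of $0$, it lies on the boundary of the partition $\{Y_n\}$, so the inducing route of \cite{HayWinZwe14} breaks down. Here I would instead verify the mixing conditions of \cite{AFV14} directly for $(f, \mu_\alpha)$ at $\zeta$. The delicate step is the short-return condition: neighborhoods of $\zeta$ are pushed by some finite iterate $f^k$ onto neighborhoods of $0$, where the non-uniform expansion creates long dwell times; the measure of these neighborhoods of $0$ is controlled by \eqref{eq:estimate-measure}, and combined with \eqref{eq:Holder-DC} this yields the required decorrelation between successive visits to $B_{\epsilon_n}(\zeta)$. Since $\zeta$ is non-periodic there is no clustering contribution, and the REPP converges to a Poisson process of intensity $1$. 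Combining classes (A) and (B) then yields the full dichotomy of Theorem~\ref{thm:main}.
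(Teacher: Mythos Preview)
Your overall strategy---induce, apply the Rychlik/countable-branch dichotomy of \cite{AFV14}, then transfer back via a point-process version of \cite{HayWinZwe14}---is exactly the paper's strategy. But two points diverge from what the paper actually does, and the first is a genuine gap.

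\textbf{Class (A), $\zeta\notin[1/2,1]$.} You fix a single inducing domain $Y=[1/2,1]$ and, when $\zeta\in(0,1/2)$, propose to apply \cite{AFV14} at ``the relevant iterate'' $f^k(\zeta)\in Y$. This does not give the REPP at $\zeta$. The transfer theorem compares the induced REPP $N_n^Y$ with the original REPP $N_n$ for the \emph{same} target set $U(u_n)$; if $\zeta\notin Y$ then $U(u_n)\cap Y=\emptyset$ for large $n$, so $N_n^Y\equiv0$ and the theorem is vacuous. Replacing $B_\varepsilon(\zeta)$ by $f^k(B_\varepsilon(\zeta))$ does not help either, since $f^k(B_\varepsilon(\zeta))$ has preimages not passing through $B_\varepsilon(\zeta)$, so the two hit processes differ. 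The paper avoids this by varying the inducing domain: let $\mathcal P$ be the renewal partition (intervals $Z$ with $Z=[1/2,1)$ or $f(Z)\in\mathcal P$), and for each $\zeta\in(0,1)$ choose $Y\in\mathcal P$ with $\zeta\in Y$. The first return map $F_Y$ to any such $Y$ is again a full-branch Bernoulli (Rychlik) system, so \cite{AFV14} applies directly at $\zeta$ itself and the transfer theorem is non-trivial.

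\textbf{Class (B), preimages of $0$.} Your claim that the inducing route ``breaks down'' here is incorrect, and the direct verification you sketch is unnecessary. In the induced system $(Y,F_Y,\mu_Y)$ these points are precisely the boundary points of the Markov partition; in the terminology of \cite{AFV14} they are \emph{aperiodic non-simple}, and Proposition~3.4(1) of that paper gives extremal index $1$ (hence Poisson REPP) at such points, still within the inducing framework. The paper uses exactly this observation rather than attempting to verify $\D,\D'$-type conditions for $(f,\mu_\alpha)$ directly, which would be delicate given only polynomial decay of correlations.
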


\begin{theorem}
\label{thm:fixed-point}
For $\z=0$, consider the maximum function $M_n=M_n(x)$ defined above.
\begin{enumerate}
\item[(a)] Let $(u_n)_n=(u_n(\tau))_n$ be chosen as in \eqref{eq:un}, then $\p(M_n\le u_n)\to 1$ as $n\to \infty$ for any $\tau>0$.
\item[(b)] For $\alpha\in(0,\sqrt5-2)$ and $\tau\geq 0$, consider the sequence of thresholds $(u_n)_n=(u_n(\tau))_n$ satisfying:
\begin{equation}
\label{eq:new-un-1}
\lim_{n\to\infty}n\mu_\alpha\big(\left[x_n,\varphi^{-1}(u_n)\right)\big)=\tau,
\end{equation}
where $x_n$ is such that $x_n<\varphi^{-1}(u_n)$ and $f_\alpha(x_n)=\varphi^{-1}(u_n)$. Then we have 
$$\lim_{n\to\infty}\p(M_n\le u_n)\to e^{-\tau}.$$ 
\end{enumerate}
\label{thm:zero}
\end{theorem}

\begin{remark}
Observe that the normalising sequence $(u_n)_{n\in\N}$ given by \eqref{eq:new-un-1} does not satisfy condition \eqref{eq:un}. In fact, for such a sequence we have $\lim_{n\to\infty}n\mu_\alpha(X_0>u_n)=\infty$. This is coherent with \cite[Corollary~3.7.4]{LLR83}. In classical Extreme Value Theory (as well as in our Theorems~\ref{thm:main} and \ref{thm:induced poiss}), the normalising constants are usually taken to be linear families depending on a parameter $y$, \ie $u_n=a_n^{-1}y+b_n$, with $a_n>0$ for all $n\in\N$, so that the limiting distribution for such linear normalisation is an extremal type distribution with $\tau=\tau(y)$ assuming three different types, see \cite{F13} and Section~\ref{ssec:shapes} for more details. 
When the extremal index is equal to 0, the types of limiting law may be different from the expected ones if the stochastic process was independent, as noted in \cite[Page~69]{LLR83}. It is interesting to observe that the normalising sequences proposed here may lead to the same extremal limiting law as if the process was independent, or not (see Section~\ref{ssec:shapes}).
\label{rmk:un}
\end{remark}

\begin{remark}
We note that in the case $\z=0$, while it is possible to rescale the thresholds to recover an EVL as in Theorem~\ref{thm:zero} (2), the corresponding REPP remains degenerate.  This result will form part of a forthcoming work \cite{FFR15}.
\end{remark}

\begin{remark}
The choice $\alpha\in(0,\sqrt5-2)$ is for technical reasons. This upper bound on $\alpha$ depends on the rate of decay of correlations, on the corresponding parameter $\beta$ and also on $\epsilon$, determining the regularity of the density. We would expect the result to hold for at least $\alpha\in (0, 1/2)$ but the current knowledge about these maps does not allow it. The obstructions appear due to the restrictions on  the space of observable functions for which the statement  \eqref{eq:Holder-DC}  about decay of correlations holds. The underlying reasoning is a blocking argument which requires a mixing condition (see condition $\D_q(u_n)$ below) needed to give asymptotic independence between the blocks of random variables. This condition would follow immediately (for $\alpha<1/2$) if one could plug indicator functions into \eqref{eq:Holder-DC}. Since the existing information about decay of correlations requires the use of Banach spaces such as H\"older functions, one has to replace the indicator functions by suitable H\"older continuous approximations, which ultimately leads to further restrictions on $\alpha$. These are the same technical limitations experienced for example in \cite{HNT12} where $\alpha\in (0, \omega_0)$ for $\omega_0\approx 1/13$. 
\end{remark}

\subsection{Comments on history and strategy}
Before discussing our approach we introduce some notation.  For a dynamical system $f:\X\to \X$ and a subset $A\subset \X$, for $x\in \X$ define 
$$r_A(x):=\inf\{n\in \N:f^n(x)\in A\},$$ the \emph{first hitting time} to $A$.
Note that there is a connection with the behaviour of the variable $r_{U(u_n)}$ and our REPP since we can break that process down into a sequence of first hits to $U(u_n)$.  This gives a connection with our REPP and the asymptotics of $r_{U(u_n)}$, the \emph{Hitting Time Statistics} (HTS).  One basic difference is that here we are concerned with all hits to $U(u_n)$, not just the first.

Our main result for the case of $\p$-typical points and for periodic points in $(0,1)$ follows quickly from previous works, including works already mentioned above, and indeed in some of these papers mention MP  explicitly.  We also remark that some of the earliest works on HTS for dynamical systems considered the case of MP maps with $\alpha\ge 1$, see for example \cite{ColGal93, ColGalSch92, CamIso95}, with a focus on the behaviour at 0.  In these cases, the sets $A_n$ considered were formed from dynamically defined cylinder sets and the analysis was done at 1/2, the preimage of 0, so that finite measure sets could be used.  In this paper we consider  the case $\alpha\in (0, 1)$, so $f$ has an acip, and we also consider more general points and sets $A_n$.

We will first consider all points in $(0,1)$, using inducing methods.  This will require us to generalise the already very flexible result of \cite{HayWinZwe14} to point processes.  Finally we use the approach which goes back to Leadbetter \cite{L73} of proving some short range and long range recurrence conditions to prove that we have a degenerate law at 0 (the extremal index is 0).

\section{Induced point processes}

Here we aim to generalise \cite{HayWinZwe14} to point processes.  In that paper, they use \cite[Corollary 5]{Zwe07a} as one of their key tools.  In our, fairly analogous, setting we use \cite[Corollary 6]{Zwe07a} instead.  Note that previous results here include \cite[Theorem 2.1]{BruSauTroVai03}, where they proved that for balls around typical points, the HTS of first return maps are the same as that for the original map - they also remarked, without details, that this can be extended to successive return times.  Also in \cite{FreFreTod13}, we extended this idea to periodic points.  The strengths of the approach in  \cite{HayWinZwe14} to HTS are that it covers \emph{all} points, and that the proof is rather short.

We will give our result comparing the point process of the induced system to that coming from the original system in a general setting and then later apply this to our MP example.
In this section, we take a dynamical system $f:\X\to \X$ with an ergodic $f$-invariant probability measure $\mu$, choose a subset $Y\subset \X$, recalling that $r_Y:Y\to \N$ is the first return time to $Y$, consider $F=F_Y:Y\to Y$ to be the first return map $f^{r_Y}$ to $Y$ (note that $r_Y$ and thus $F$ may be undefined at a zero Lebesgue measure set of points which do not return to $Y$, but most of these points are not important, so we will abuse notation here).  Let $\mu_Y(\cdot)=\frac{\mu(\ \cdot \  \cap Y)}{\mu(Y)}$ be the conditional measure on $Y$.  By Kac's Theorem $\mu_Y$ is $F_Y$-invariant.

Setting $v_n^Y=1/\mu_Y(X_0>u_n)$, for the induced process $X_i^Y=\phi\circ F_Y^i$,
$$N_n^Y(J):=\nn_{u_n}^Y(v_n^YJ)=\sum_{j\in v_n^YJ\cap\N_0}\I_{X_j^Y>u_n}.$$

In keeping with \cite{HayWinZwe14}, we denote our inducing domain by $Y$.  Denote the speeded up return time by $r_{A, Y}$ (i.e., $r_{A, Y}(x)=\inf\{n\in \N:F_Y^n(x)\in A\}$)  and the induced measure on $Y$ by $\mu_Y$. For each $k\geq 2$ and $x\in\X$, we also define $r^k_{A}(x)=r_{A}\left(f^{r^1_{A}(x)+\ldots+r^{k-1}_{A}(x)}(x)\right)$ and, for $x\in Y$ and $A\subset Y$, the corresponding speeded up version $r^k_{A,Y}(x)=r_{A}\left(F_Y^{r^1_{A,Y}(x)+\ldots+r^{k-1}_{A,Y}(x)}(x)\right)$.
Moreover, for $\kappa>0$ and $I\in\mathcal S$, set $I^\eta:=\cup_{s\in I}B_{\eta}^+(s)$ where $B_\eta^+(s)=(s-\eta, s+\eta) \cap [0,\infty)$. For $J=\in\mathcal R$ such that $J=\cup_{j=1}^\ell I_j $, set $J^\eta=\cup_{j=1}^\ell I_j^\eta$.

\begin{theorem}
For every $J\in\mathcal R$, assume that $N(J^\eta)$ is continuous in $\eta$, for all small $\eta$. That is to say that if $J=\cup_{j=1}^\ell I_j $, then for every $k_1, \ldots, k_\ell$, the map $\eta\mapsto \mu_Y(N(I_1^\eta)\ge k_1, \ldots,N(I_\ell^\eta)\ge k_\ell)$ is continuous for $\eta\in [0, \eta_0)$. 
Then
$$N_n^Y\stackrel{\mu_Y}{\Longrightarrow} N \text{ as } n\to\infty \text{ implies } N_n \stackrel{\mu}{\Longrightarrow} N \text{ as } n\to\infty.$$
\label{thm:pp_ret_orig}
\end{theorem}

\begin{proof}
By \cite[Corollary 6]{Zwe07a}, for hitting times point processes such as $(N_n)_n$ and an ergodic reference measure $m$, if $P\ll m$ then $N_n\stackrel{P}{\Longrightarrow} N$ in $\M_p([0, \infty))$ implies $N_n\stackrel{Q}{\Longrightarrow} N$  in $\M_p([0, \infty))$ for any $Q\ll m$.  So replacing both $m$ and $Q$ with $\mu$ and replacing $P$ with $\mu_Y$ we see that 
for our sequence of processes, if $N_n\stackrel{\mu_Y}{\Longrightarrow} N$ in $\M_p([0, \infty))$, then $N_n\stackrel{\mu}{\Longrightarrow} N$  in $\M_p([0, \infty))$.  Thus, by Remark~\ref{rem:convergence-point-processes}, 
 it suffices to show that for every $J\in \RR$, such that $J=\cup_{j=1}^\ell I_j $,  and 
 all $k_1,\ldots,k_\ell\in \N$, we have
\begin{multline*}
\mu_Y(N_n^Y(I_1)\ge k_1,\ldots, N_n^Y(I_\ell)\ge k_\ell ) \stackrel{n\to\infty}{\longrightarrow} \mu_Y(N(I_1)\ge k_1, \ldots, N(I_\ell)\ge k_\ell )\\ \text{ implies } \mu(N_n(I_1)\ge k_1,\ldots,N_n(I_\ell)\ge k_\ell) \stackrel{n\to\infty}{\longrightarrow} \mu(N(I_1\ge k_1,\ldots, N(I_\ell\ge k_\ell).\end{multline*}
For $\delta>0$ and $M\in \N$, let 
$$E_M=E_M^\delta:=\left\{\left(\frac{1-\delta}{\mu(Y)}\right)j\le r_Y^j\le \left(\frac{1+\delta}{\mu(Y)}\right)j \text{ for all } j\ge M\right\} \text{ and } G_N:=\{r_{U(u_n), Y}\ge N\}.$$
As in \cite{HayWinZwe14}, $\mu_Y(G_N^c)\le N\mu_Y(U(u_n))\to 0$ as $n\to\infty$.  Also the ergodic theorem says that $\mu_Y((E_M^\delta)^c)\to 0$ as $M\to\infty$.  
Hence we may restrict our focus to $G_M\cap E_M^\delta$.

For $x\in E_M^\delta$, $${r_{U(u_n), Y}^k(x)}\left(\frac{1-\delta}{\mu(Y)}\right)\le r_{U(u_n)}^k(x)=\sum_{j=0}^{r_{U(u_n), Y}^k(x)-1}r_Y\circ F_Y^j(x) =r_Y^{r_{U(u_n), Y}^k(x)}(x)\le {r_{U(u_n), Y}^k(x)}\left(\frac{1+\delta}{\mu(Y)}\right) $$
We can deduce that for $x\in G_M\cap E_M^\delta$,
$$\mu(Y)r_{U(u_n)}^k(x) \in B_{\delta r_{U(u_n), Y}^k(x)}(r_{U(u_n), Y}^k(x)),$$
where we use the notation $B_\eps(y)=(y-\eps,y+\eps)$.

So if $r_{U(u_n), Y}^k(x)\in v_n^YJ$ then $\mu(Y)r_{U(u_n)}^k(x)\in v_n^YJ^\delta$ and so $r_{U(u_n)}^k(x)\in v_nJ^{\delta}$.  Therefore,\begin{multline*}
\mu_Y\left(\left\{N_n(I_1^{\delta})\ge k_1,\ldots,N_n(I_\ell^{\delta})\ge k_\ell\right\}\cap (E_M^\delta\cap G_M)\right) \ge\\ \mu_Y\left(\left\{N_n^Y(I_1)\ge k_1, \ldots,N_n^Y(I_\ell)\ge k_\ell\right\}\cap (E_M^\delta\cap G_M)\right).\end{multline*}
Setting $\delta':=\frac\delta{1+\delta}$, we also obtain that 
$$\frac1{\mu(Y)} r_{U(u_n), Y}^k(x) \in B_{\delta' r_{U(u_n)}^k(x)}(r_{U(u_n)}^k(x))$$
 for $x\in G_M\cap E_M^\delta$.  Analogously to above, this leads us to 
\begin{multline*}\mu_Y\left(\left\{N_n^Y(I_1^{\delta'})\ge k_1,\ldots,N_n^Y(I_\ell^{\delta'})\ge k_\ell\right\}\cap (E_M^\delta\cap G_M)\right) \ge \\ \mu_Y\left(\left\{N_n(I_1)\ge k_1,\ldots,N_n(I_\ell)\ge k_\ell\right\}\cap (E_M^\delta\cap G_M)\right).
\end{multline*}
So since $\eps, \delta>0$ were arbitrary, we are finished.
\end{proof}

\section{Application of inducing to Manneville-Pomeau}

In this section we prove our main theorem for all points $\z\in (0,1)$.

Let $\P$ be the \emph{renewal partition}, that is the partition defined inductively by $\cyl\in \P$ if $\cyl=[1/2, 1)$ or $f(\cyl)\in \P$.  Now let $Y\in \P$ and let $F_Y$ be the first return map to $Y$ and $\mu_Y$ be the conditional measure on $Y$.  It is well-known that $(Y, F_Y, \mu_Y)$ is a Rychlik system (see \cite{R83} or \cite[Section~3.2.1]{AFV15} for the essential information about such systems) and so the REPP is understood as in \cite[Corollary 3]{FreFreTod13}.  Hence by \cite{AFV15} we have the following theorem.

\begin{theorem}  Given $\z\in Y$, consider the REPP $N_n^Y$ defined above.  Then either
\begin{enumerate}
\item[(a)] $\z$ is not periodic and $N_n^Y\stackrel{\mu_Y}{\Longrightarrow}N$, where $N$ is a homogeneous Poisson process with intensity 1.
\item[(b)] $\z$ is periodic with period $p$ and $N_n^Y\stackrel{\mu_Y}{\Longrightarrow} N$, where $N$ is a compound Poisson process  with intensity $\theta=1-\left|D(F_Y^{-p})(\z)\right|$ and multiplicity d.f. $\pi$ given by \footnote{We note that there is an error in \cite[Theorem 1]{FreFreTod13}, propagated throughout the main results there: the $\kappa$ should be replaced by $\kappa-1$.}
$\pi_\kappa=\theta(1-\theta)^{\kappa-1}$,
for every $\kappa\in\N_0$.
\end{enumerate}
\label{thm:induced poiss}
\end{theorem}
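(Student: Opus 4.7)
The plan is to reduce Theorem~\ref{thm:induced poiss} entirely to the existing results for Rychlik systems in \cite{AFV14}, combined with the REPP machinery of \cite[Corollary 3]{FreFreTod13}. The starting observation is structural: because $Y$ is an element of the renewal partition $\P$, the first-return map $F_Y$ has countably many full monotone branches, each mapping onto $Y$; together with the classical bounded distortion estimates for the Liverani--Saussol--Vaienti map on the induced scheme, this places $(Y,F_Y,\mu_Y)$ squarely in the Rychlik framework of \cite{R83}. In particular, $\mu_Y$ is absolutely continuous with a density of bounded variation and $F_Y$ enjoys exponential decay of correlations against bounded variation observables. This is precisely the hypothesis under which \cite{AFV14} establishes the full dichotomy for EVLs.

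First, I would set up the stochastic process $X_j^Y=\phi\circ F_Y^j$ with $\phi$ a monotone function of $\dist(\cdot,\z)$, so that the exceedance sets $\{X_0^Y>u_n\}$ form a decreasing family of balls shrinking to $\z$. The dependence conditions $\D_q(u_n)$ and $\D'_q(u_n)$ needed to promote a maxima-level result into a full point-process convergence are then verified directly from the exponential mixing of $F_Y$, exactly as in \cite{AFV14}: the short-range condition $\D_q(u_n)$ follows from the decay of correlations against the indicator of an annular target set (approximated in bounded variation), and the anti-clustering condition $\D'_q(u_n)$ follows from standard estimates on short returns in Rychlik systems.

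Second, for $\z\in Y$ non-periodic under $F_Y$, the preimages of $\z$ under finite iterates of $F_Y$ are disjoint from $\z$, so $q=0$ suffices and no clustering occurs. The general convergence scheme of \cite[Corollary 3]{FreFreTod13} then produces the standard Poisson limit with intensity~$1$. For $\z$ a periodic point of period $p$ under $F_Y$, the relevant clustering is driven by the unique inverse branch of $F_Y^p$ fixing $\z$: bounded distortion and the smoothness of $h_\alpha$ on $Y$ (which is bounded away from $0$) yield
\[
\lim_{n\to\infty}\frac{\mu_Y\bigl(U(u_n)\cap F_Y^{-p}U(u_n)\bigr)}{\mu_Y(U(u_n))}=\left|D(F_Y^{-p})(\z)\right|,
\]
so the extremal index from \cite{AFV14} is $\theta=1-|D(F_Y^{-p})(\z)|$. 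The same analysis, applied iteratively to longer returns, shows that successive entries into $U(u_n)$ within a cluster are governed by an i.i.d.\ Bernoulli($1-\theta$) chain of further contractions, giving the geometric multiplicity distribution $\pi_\kappa=\theta(1-\theta)^{\kappa-1}$ and thus the claimed compound Poisson limit.

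The main obstacle is essentially bookkeeping rather than analytical: confirming that the abstract Rychlik hypotheses of \cite{AFV14} are indeed met by $F_Y$ (in particular the bounded variation of the density and the full-branch structure on the partition $\P$ restricted to $Y$), and reconciling the conventions for the multiplicity distribution, since \cite[Theorem 1]{FreFreTod13} contains the $\kappa$ versus $\kappa-1$ discrepancy flagged in the footnote and this shift must be threaded consistently through the present statement. No new probabilistic estimate is required beyond what \cite{AFV14} and \cite{FreFreTod13} already supply.
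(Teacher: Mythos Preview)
Your proposal is correct and follows essentially the same route as the paper: recognise $(Y,F_Y,\mu_Y)$ as a Rychlik (indeed Bernoulli) system and then invoke \cite{AFV14} together with \cite[Corollary~3]{FreFreTod13}. The paper's argument is in fact shorter than yours, since it cites the relevant propositions of \cite{AFV14} directly rather than re-deriving the $\D_q/\D'_q$ verification and the computation of $\theta$.

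One point the paper singles out that you pass over: the preimages of $0$ lying in $Y$ are boundary points of the Markov partition for $F_Y$, so shrinking balls around them straddle several cylinders and they are not covered by the generic case \cite[Proposition~3.2]{AFV14}. The paper handles these separately by observing they are ``aperiodic non-simple'' in the terminology of \cite{AFV14} and then applying \cite[Proposition~3.4(1)]{AFV14} to obtain extremal index $1$ (hence the standard Poisson limit) at such points. Your sketch implicitly folds these into the non-periodic case, which is morally right but needs this extra citation to be complete.
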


For points in $Y\sm \cup_{n\ge 1} f^{-n}(0)$, this theorem is Proposition 3.2 of  \cite{AFV15}.  For the boundary points $\cup_{n\ge 1} f^{-n}(0)$, in the language of  \cite{AFV15}, any such point is called aperiodic non-simple.  Hence by Proposition 3.4(1) of that paper, we have a standard extremal index of 1 at all such points. Varying $Y$ means that we have considered all points in $(0,1)$.  So combining Theorems~\ref{thm:pp_ret_orig} and \ref{thm:induced poiss} completes the proof of Theorem~\ref{thm:main} for $\z\neq 0$.

\section{Analysis of the indifferent fixed point}

The tangency of the graph of the MP map with the identity map, creates an intensive clustering of exceedances of levels $(u_n)_{n\in \N}$, when they are chosen as in \eqref{eq:un}, that leads to the existence of an EI equal to 0, which leads to a degenerate limit distribution for $M_n$. However, if we choose the levels $(u_n)_{n\in \N}$ not in the classical way, but rather a sequence of lower thresholds, so that the exceedances that escape the clustering effect have more weight, then we can recover the existence of a non-degenerate distribution for the maxima.

The proof of an EI equal to 0 for the usual normalising sequences follows easily from the existing connections between Return Times Statistics (RTS), Hitting Times Statistics (HTS) and EVL, which we briefly recall in the next subsection. The proof of the existence of a non degenerate limit, under a different normalising sequence of thresholds, is more complicated and requires some new results from \cite{FFT15}, which we will recall below.

\subsection{The usual normalising sequences case}

For any $\zeta\in[0,1]$, let $B_\varepsilon(\zeta)=(\zeta-\varepsilon,\zeta+\varepsilon)\cap[0,1].$ 
Combining the main result from \cite{FreFreTod10} and  \cite{HLV07}, if there exists a non degenerate
d.f. $\tilde G$ such that  for all $t\ge 0$,
\begin{equation*}
\lim_{\varepsilon\to 0}\mu_{\alpha}\left(r_{B_\varepsilon(\zeta)}\leq \frac t{\mu_\alpha(B_\varepsilon(\zeta))} \ \middle\vert \ B_\varepsilon(\zeta)\right)=\tilde G(t),
\end{equation*} 
then for $G$ defined by
\begin{equation}
\label{eq:HTS-RTS}
G(t)=\int_0^t(1-\tilde G(s))\,ds,
\end{equation}
it can be shown that $H$ exists and equals $G$.

Let $U=[0,b)$ and $A=[a,b)$, where $a$ is such that $f(a)=b$, \ie $b=a+2^\alpha a^{1+\alpha}$. Using \eqref{eq:estimate-measure} we easily get
$\mu_\alpha(U)\sim_{c} a^{1-\alpha}+(1-\alpha)2^{\alpha}a+o(a)$ and $\mu_\alpha([0,a))\sim_{c} a^{1-\alpha}$.

Next we compute the RTS distribution, which we denote by $\tilde G(s)$. For $s\leq 0$, we easily have that $\tilde G(s)=0$, since $r_U\geq1$, by definition of hitting time. Let $s>0$ then
\begin{align*}
\tilde G(s)&=\lim_{b\to0} \mu_U\left(r_U \leq \frac s{\mu(U)}\right)=\lim_{b\to0} \frac1{\mu(U)}\mu\left(\left\{r_U \leq \frac s{\mu(U)}\right\}\cap U\right)\\
&\geq \lim_{b\to0} \frac {\mu(U\setminus A)}{\mu(U)}=\lim_{b\to0} \frac{\mu([0,a))}{\mu([0,b))}=1
\end{align*}
Then by \eqref{eq:HTS-RTS},
$G(t)=\int_0^t 1-\tilde G(s)ds =0$, which , by \cite{FreFreTod10}, corresponds to an EI equal to 0. Recall that $\bar H(\tau)=\e^{-\theta\tau}=1$, which means that, in this case, $H(\tau)=0$.

\subsection{Adjusted choice of thresholds}

In order to prove the existence of EVLs in a dynamical systems context, there are a couple of conditions on the dependence structure of the stochastic process that if verified allow us to obtain such distributional limits. These conditions are motivated by the conditions $D(u_n)$ and $D'(u_n)$ of Leadbetter but were adapted to the dynamical setting and further developed both in the absence of clustering,  such as in \cite{C01, FF08a, HNT12}, and in the presence of clustering in \cite{FreFreTod12}. Very recently, in \cite{FFT15}, the authors provided some more general conditions, called $\D(u_n)$ and $\D_q'(u_n)$, which subsumed  the previous ones and allowed them  to address both the presence ($q\geq1$) and the absence ($q=0$) of clustering. To distinguish these conditions the authors used a Cyrillic D to denote them. We recall these conditions here.

Given a sequence $(u_n)_{n \in \N}$ of real numbers satisfying \eqref{eq:un} and $q\in\N_0$, set
$$A_n^{(q)}:=\{X_0>u_n,X_1\leq u_n,\ldots, X_q\leq u_n\}.
$$

For $s,\ell\in\N$ and an event $B$, let
\begin{equation}
\label{eq:W-def}
\mathscr W_{s,\ell}(B)=\bigcap_{i=s}^{s+\ell-1} f^{-i}(B^c).
\end{equation}

\begin{condition}[$\D_q(u_n)$]\label{cond:D} We say that $\D(u_n)$ holds for the sequence $X_0,X_1,\ldots$ if, for every  $\ell,t,n\in\N$
\begin{equation}\label{eq:D1}
\left|\p\left(\A_n\cap
 \mathscr W_{t,\ell}\left(\A_n\right) \right)-\p\left(\A_n\right)
  \p\left(\mathscr W_{0,\ell}\left(\A_n\right)\right)\right|\leq \gamma(q,n,t),
\end{equation}
where $\gamma(q,n,t)$ is decreasing in $t$ and  there exists a sequence $(t_n)_{n\in\N}$ such that $t_n=o(n)$ and
$n\gamma(q,n,t_n)\to0$ when $n\rightarrow\infty$.
\end{condition}

For some fixed $q\in\N_0$, consider the sequence $(t_n)_{n\in\N}$ given by condition $\D_q(u_n)$ and let $(k_n)_{n\in\N}$ be another sequence of integers such that
\begin{equation}
\label{eq:kn-sequence}
k_n\to\infty\quad \mbox{and}\quad  k_n t_n = o(n).
\end{equation}

\begin{condition}[$\D'_q(u_n)$]\label{cond:D'q} We say that $\D'_q(u_n)$
holds for the sequence $X_0,X_1,\ldots$ if there exists a sequence $(k_n)_{n\in\N}$ satisfying \eqref{eq:kn-sequence} and such that
\begin{equation}
\label{eq:D'rho-un}
\lim_{n\rightarrow\infty}\,n\sum_{j=1}^{\lfloor n/k_n\rfloor}\p\left( \A_n\cap f^{-j}\left(\A_n\right)
\right)=0.
\end{equation}
\end{condition}

We note that, when $q=0$, condition $\D'_q(u_n)$ corresponds to condition $D'(u_n)$ from \cite{L73}.

Now let
\begin{equation}
\label{eq:OBrien-EI}
\vartheta=\lim_{n\to\infty}\vartheta_n=\lim_{n\to\infty}\frac{\p(\A_n)}{\p(U_n)}.
\end{equation}

From \cite[Corollary~2.4]{FFT15}, it follows that if the stochastic process $X_0, X_1,\ldots$ satisfies both conditions $\D_q(u_n)$ and $\D'_q(u_n)$ and the limit in \eqref{eq:OBrien-EI} exists then 
$$\lim_{n\to\infty}\p(M_n\leq u_n)= \e^{-\vartheta\tau}.$$

Now, we consider the fixed point $\z=0$. For every $n\in\N$, we require $\mu_\alpha(U_n)\sim \tau/n$.  Set $y_n$ to be such that $U_n=\{X_0>u_n\}=[0,y_n)$ and set $x_n\in U_n$ so that $f_\alpha(x_n)=y_n$, \ie $y_n=x_n+2^\alpha x_n^{1+\alpha}$. Using \eqref{eq:estimate-measure} we easily get
\begin{align}
\mu_\alpha(U_n)&\sim_{c} x_n^{1-\alpha}+(1-\alpha)2^{\alpha}x_n+o(x_n)\label{eq:Un-estimate}\\
\mu_\alpha([0,x_n))&\sim_{c} x_n^{1-\alpha}\label{eq:Un-1-estimate}\\
\mu_\alpha([x_n, y_n))&\sim_{c} (1-\alpha)2^{\alpha}x_n+o(x_n)\label{eq:Qn-estimate}
\end{align}

Now, since we are assuming that $\mu_\alpha(U_n)\sim \tau/n$, then $x_n\sim_{c} 1/n^{1/(1-\alpha)}$. Observe that $\mu_\alpha(U_n\cap f^{-1}_\alpha(U_n))=\mu_\alpha([0,x_n))\sim_{c} x_n^{1-\alpha}\sim_{c} 1/n$. Hence, if we consider $q=0$, the periodicity of $\zeta$ implies that $\D'_q(u_n)$ does not hold since 
$$
n\sum_{j=1}^{\lfloor n/k_n\rfloor}\p\left( U_n\cap f^{-j}\left(U_n\right)\right)\geq n\mu_\alpha(U_n\cap f^{-1}_\alpha(U_n))>0,
$$
for all $n\in\N$. Hence, here, given that $\zeta$ is a periodic point of period $1$ the natural candidate for $q$ is $q=1$. From here on we always assume that $q=1$. 

In this case, $\A_n=[x_n, y_n)=:Q_n$. However, if we plug \eqref{eq:Qn-estimate} and \eqref{eq:Un-estimate} into \eqref{eq:OBrien-EI}, we obtain that $\vartheta=0$, which means that the natural candidate for a limit distribution for $\mu_\alpha(M_n\leq u_n)$ is degenerate.

The problem is that the indifferent fixed point creates too much dependence. In \cite{FreFreTod12}, under a condition called $S\!P$, we have seen that when $\zeta$ is periodic, the probability of having  no entrances in $U_n$, among the first $n$ observations, is asymptotically equal to the probability of having no entrances in $Q_n$, among the first $n$ observations, \ie 
$$
\lim_{n\to\infty}\p(M_n\leq u_n)=\lim_{n\to\infty}\p(\mathscr W_{0,n}(U_n))=\lim_{n\to\infty}\p(\mathscr W_{0,n}(Q_n)).
$$

In \cite{FFT15}, it was shown that it is possible to replace $U_n$ by $Q_n$ even without the $S\!P$ condition (see \cite[Proposition~2.7]{FFT15}). Making use of this upgraded result, we can now change the normalising sequence of levels $(u_n)_{n\in\N}$ so that we can still obtain a non-degenerate limit for $\p(M_n\leq u_n)$. To understand the need to change the normalising sequence in order to obtain a non-degenerate limit, recall that condition \eqref{eq:un} guaranteed that $M_n$ was normalised by a sequence of levels that kept the average of exceedances among the first $n$ observations  at an (almost) constant value $\tau>0$. When $\vartheta>0$, condition \eqref{eq:un} also guarantees that the average number of entrances in $Q_n$ among the first $n$ observations is kept at an (almost) constant value $\theta\tau>0$. Here, since $\vartheta=0$, we need to change $u_n$ so that the average number of entrances in $Q_n$ is controlled, \ie
\begin{equation}
\label{eq:new-un}
\lim_{n\to\infty} n\p(\A_n)=\tau>0.
\end{equation}

From equations (2.15) and (2.16) from \cite{FFT15} one gets:
\begin{align}
\label{eq:error1}
\left|\p(\mathscr W_{0,n}(\A_n))-\left(1-\left\lfloor \frac n{k_n}\right\rfloor\p(\A_n)\right)^{k_n}\right|\leq &  2k_nt_n\p(U_n)+ 2n\sum_{j=1}^{\lfloor n/k_n\rfloor-1}\p\left(\A_n \cap f^{-j}\A_n\right)\nonumber\\&+\gamma(q,n,t_n)
\end{align}
Note that since by \eqref{eq:new-un} we have $\lim_{n\to\infty}\left(1-\left\lfloor \frac n{k_n}\right\rfloor\p(\A_n)\right)^{k_n}=\e^{-\tau}$, then if both conditions $\D_q(u_n)$ and $\D'_q(u_n)$ hold, then all the terms on the left of \eqref{eq:error1} vanish, as $n\to\infty$, and consequently:
\begin{equation}
\label{eq:conclusion}
\lim_{n\to\infty}\p(M_n\leq u_n)=\lim_{n\to\infty}\p(\mathscr W_{0,n}(\A_n))=\e^{-\tau}.
\end{equation}
 
Hence, in order to show that we can still obtain a non-degenerate limiting law for the distribution of $M_n$ when $\zeta=0$, we start by taking a sequence $(u_n)_{n\in\N}$ so that \eqref{eq:new-un} holds. Note that this implies that by \eqref{eq:Qn-estimate} and \eqref{eq:Un-estimate} we have that $x_n\sim_{c} 1/n$ and $\mu_\alpha(U_n)\sim_{c} 1/n^{1-\alpha}$. In particular, this means that $\lim_{n\to\infty}n \mu_\alpha(U_n)=\infty$, which contrasts with the usual case where condition \eqref{eq:un} holds. 

To prove the existence of the limit in \eqref{eq:conclusion} we need to verify conditions $\D_q(u_n)$ and $\D'_q(u_n)$, where $q=1$. We start by the latter, which is more complicated. 


\subsubsection{Proof of $\D'_q(u_n)$}

We will next focus on the proof of $\D'_q(u_n)$ in the case of part (2) of Theorem~\ref{thm:zero}.  That is, $(x_n)_n$ will be chosen so that $x_n\sim_{c} 1/n$, as described above.  Later we will note that we can change $(x_n)_n$ to recover a degenerate law as in part (1) of that theorem.

We have to estimate the quantity
$$
\Delta'_n:=n\sum_{j=1}^{[n/k_n]}\mu_{\a}(Q_n\cap f^{-j}Q_n)
$$
where $Q_n=[x_n, y_n),$ for $x_n\sim_{c} \frac1n$ and $y_n=f(x_n).$ We follow the proof of \cite[Lemma 3.5]{HirSauVai99}. By denoting by $P$ the transfer operator and by $\tau_n\in \N$ the {\em first return time} of the set $Q_n$ into itself, we have:
$$
\Delta'_n\le n\  [n/k_n]\mu_{\alpha}(Q_n) \sup_{j=\tau_n, \dots, [n/k_n]}\sup_{Q_n}\frac{P^j({\bf 1}_{Q_n}h)}{h}
$$
where $h$ is the density of $\mu_{\a}.$ In order to compute $P^{\tau_n}({\bf 1}_{Q_n}h)$ we need to know how many branches of $f^{\tau_n}$ have their domain intersecting $Q_n.$ If $\xi_0$ is the original partition into the sets $[0,1/2), [1/2, 1]$, we denote with $\xi_k$ the join $\xi_k:=\xi_0\vee f^{-1}\xi_0\vee \cdots \vee f^{k-1}\xi_0.$

We begin by observing that $Q_n$ contains at most one boundary point of the partition $\xi_{\tau_n-1},$ otherwise one point of $Q_n$ should be sent into the same set, being $f^{\tau_n-1}$ onto on each domain of injectivity. Then when we move to $\xi_{\tau_n},$ the interval $Q_n$ will be covered by at most $4$ cylinders of monotonicity of the partition $\xi_{\tau_n}.$ By denoting them from left to right by $C_{\tau_n, 1},\dots, C_{\tau_n, 4}$ we have
$$
P^{\tau_n}({\bf 1}_{Q_n}h)=\sum_{i=1}^4\frac{h\circ f^{-\tau_n}_i {\bf 1}_{f^{\tau_n}_iQ_n}}{Df^{\tau_n}\circ f^{-\tau_n}_i}
$$
where $f^{\tau_n}_i$ denotes the  branch of $f^{\tau_n}$ restricted to $C_{\tau_n, i}.$
Notice that the density is computed in $Q_n$ whose left boundary point is $1/n$, so $h$ is bounded from above by a constant times $n^{\a}.$ We have now to estimate  the derivative $Df^{\tau_n}\circ f^{-\tau_n}_i$ on the sets $Q_n\cap C_{\tau_n, i}$.
Let us define $r_m$ as the $m$-left preimage of $1$, $r_m:=f^{-m}_1(1)$ and define $m(n)$ as $r_{m(n)}\le x_n \le r_{m(n)-1}.$ Then the interval $[x_n, y_n)$ will intersect the two cylinders $(r_{m(n)}, r_{m(n)-1})$ and $(r_{m(n)-1}, r_{m(n)-2})$ and the first return of $Q_n$ will be larger than the first returns of those two cylinders; on the other hand the first return of $(r_{m(n)}, r_{m(n)-1})$ is $m(n)$. The derivative $Df^{\tau_n}$ will be computed at a point $\iota_n$ which will be in one of those two cylinders; suppose without any restriction that $\iota_n\in (r_{m(n)}, r_{m(n)-1})$.  Since we need to bound from below the derivatives, we begin to replace $Df^{\tau_n}(\iota_n)$ with $Df^{m(n)}(\iota_n);$  then we observe that the map $f^{m(n)}: [r_{m(n)}, r_{m(n)-1}]\rightarrow [0,1]$ is onto and we use the distortion bound given, for instance, in [LSY, Lemma 5] which states that there exists a constant $C$ such that for any $m\ge 1$ and any $x,y\in [r_{m}, r_{m-1}]$ we have $\left |\frac{Df^m(x)}{Df^m(y)}\right|\le C$. We finally note that $m(n)\sim_{c} n^{\a}.$ This implies immediately  that $$\frac{1}{Df^{m(n)}(\iota_n)}\le C|r_{m(n)-1}-r_{m(n)}|\sim_{c} C\frac{1}{m(n)^{\frac{1}{\a}+1}}\sim_{c} C\frac{1}{n^{1+\a}}.$$

 Consequently ($C$ will continue to denote a constant which could vary from one bound to another)
$$
P^{\tau_n}({\bf 1}_{Q_n}h)\sim_{c} \frac{1}{n}
$$
We now continue as in [HSV] by getting for the other powers of the transfer operator:
$$
\frac{P^j({\bf 1}_{Q_n}h)}{h}\le \frac{P^{j-\tau_n}{\bf 1}}{h}\sup P^{\tau_n}({\bf 1}_{Q_n}h)\le \frac{P^{j-\tau_n}\frac{h}{\inf \ h}}{h}\sup P^{\tau_n}({\bf 1}_{Q_n}h)\le \frac{C}{\inf \ h}\frac{1}{n}
$$
 and finally
$$
\Delta'_n\le n \ [n/k_n]\mu_{\alpha}(Q_n) \frac{C}{\inf \ h}\frac{1}{n}
$$

We now know that $\mu_{\alpha}(Q_n)\sim_{c} \frac{1}{n};$ hence
$$
\Delta'_n\le C n \ [n/k_n] \mu_{\alpha}(Q_n)^2 \frac{ 1}{\mu_{\alpha}(Q_n)}\frac{1}{n}\sim_{c} [n^2 \ \mu_{\alpha}(Q_n)^2] \ \frac{1}{k_n}.
$$

So letting $n\to \infty$, we see that $\D_q'(u_n)$ holds.


\subsubsection{Proof of $\D_q(u_n)$}

This follows since, as in \eqref{eq:Holder-DC}, we have decay of correlations of H\"older functions against bounded measurable functions and condition $\D_q(u_n)$ was designed to follow from sufficiently fast decay of correlations, as shown in \cite[Proposition 5.2]{Fre13}. In order to compute the required rate of decay of correlations, which will impose a restriction on the domain of the parameter $\alpha$, we recall here the above-mentioned result so that we can follow the computations closely.
\begin{proposition}[{\cite[Proposition 5.2]{Fre13}}]
\label{prop:Holder-Dp}
Assume that $\X$ is a compact subset of $\R^d$ and $f:\X\to\X$ is a system with an acip $\p$, such that $\frac{d\p}{\text{dLeb}}\in L^{1+\epsilon}$.  Assume, moreover, that the system has decay of correlations for all $\phi\in\mathcal H_\beta$ against any $\psi\in L^{\infty}$ so that there exists some $C>0$ independent of $\phi,\psi$ and $t$, and a rate function 
$\varrho:\N\to\R$ such that 
\begin{equation}
\label{eq:Holder-DC-1}
\left| \int\phi\cdot(\psi\circ f^t)d\p-\int\phi d\p\int\psi
d\p\right|\leq C\|\phi\|_{\mathcal H_\beta}\|\psi\|_\infty \varrho(t),
\end{equation}
 and $n^{1+\beta(1+\max\{0,(\epsilon+1)/\epsilon-d\}+\delta)}\varrho(t_n)\to0$, as $n\to\infty$ for some $\delta>0$ and $t_n=o(n)$.
Then condition $\D_q(u_n)$ holds.
\end{proposition}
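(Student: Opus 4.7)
The plan is to deduce condition $\D_q(u_n)$ from the decay of correlations hypothesis \eqref{eq:Holder-DC-1} by regularising the indicator of $\A_n$. Rewrite the left-hand side of \eqref{eq:D1} as the covariance
\[
\left|\int \I_{\A_n}\cdot(\I_{\mathscr W_{0,\ell}(\A_n)}\circ f^t)\,\dif\p-\int\I_{\A_n}\,\dif\p\int\I_{\mathscr W_{0,\ell}(\A_n)}\,\dif\p\right|.
\]
The observable $\I_{\mathscr W_{0,\ell}(\A_n)}$ is bounded and measurable, so it slots straight into \eqref{eq:Holder-DC-1} as the $L^\infty$ factor. The first factor, however, is a discontinuous indicator and must be approximated by a H\"older function.

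For an auxiliary scale $\rho>0$ to be chosen, I would introduce the standard Lipschitz cut-off $\widetilde\phi_n^\rho$ equal to $1$ on $\A_n$, vanishing outside the $\rho$-neighbourhood of $\A_n$, with Lipschitz constant $\rho^{-1}$; interpolating the Lipschitz and sup bounds gives $\|\widetilde\phi_n^\rho\|_{\mathcal H_\beta}\le C\rho^{-\beta}$. Applying \eqref{eq:Holder-DC-1} to $\widetilde\phi_n^\rho$ and $\I_{\mathscr W_{0,\ell}(\A_n)}$ contributes $C\rho^{-\beta}\varrho(t)$, and the cost of replacing $\I_{\A_n}$ by $\widetilde\phi_n^\rho$ in each factor is at most $\p(B_\rho(\partial\A_n))$. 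Using $h=\dif\p/\dif\Leb\in L^{1+\epsilon}$ together with H\"older's inequality,
\[
\p(B_\rho(\partial\A_n))\le \|h\|_{1+\epsilon}\,\Leb(B_\rho(\partial\A_n))^{\epsilon/(1+\epsilon)}.
\]
Because $\partial\A_n\subset\partial U(u_n)\cup\bigcup_{i=1}^{q}\partial f^{-i}(U(u_n))$ and $U(u_n)$ is a ball of radius $\sim u_n$, a direct geometric estimate bounds $\Leb(B_\rho(\partial\A_n))$ by $C\,u_n^{d-1}\rho$ when $\rho\le u_n$ and by $C\rho^d$ when $\rho\ge u_n$. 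Combined with the scaling $u_n\sim n^{-1/d}$ forced by \eqref{eq:un}, this gives
\[
\gamma(q,n,t)\le C\rho^{-\beta}\varrho(t)+C\,n^{-\kappa(d,\epsilon)}\rho^{\epsilon/(1+\epsilon)}
\]
for an explicit exponent $\kappa(d,\epsilon)>0$.

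To close the argument, choose $\rho=\rho_n$ as the largest negative power of $n$ for which the second term is $o(1/n)$; substituting back turns $n\gamma(q,n,t_n)$ into precisely the quantity $n^{1+\beta(1+\max\{0,(\epsilon+1)/\epsilon-d\}+\delta)}\varrho(t_n)$ appearing in the hypothesis, which tends to $0$ and proves $\D_q(u_n)$. The main technical obstacle is the geometric step: the $\max$ in the stated exponent records exactly the two regimes in the bound on $\Leb(B_\rho(\partial\A_n))$, with $\max=0$ corresponding to the coarse isoperimetric bound $\lesssim\rho^d$ sufficing and the nontrivial value $(\epsilon+1)/\epsilon-d$ corresponding to the transverse bound $\lesssim u_n^{d-1}\rho$ being binding (this happens in low dimension or when the density regularity $\epsilon$ is poor, in particular in the one-dimensional MP setting needed later). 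One also has to verify, using that each $f^i$ for fixed $i\le q$ has locally Lipschitz inverse branches off a negligible critical set, that the preimage contributions to $\partial\A_n$ carry Lebesgue measure of the same order as $\partial U(u_n)$ itself, so that the preimage pieces do not spoil the isoperimetric estimate.
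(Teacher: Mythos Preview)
The paper does not contain a proof of this proposition: it is explicitly quoted from \cite{Fre13} (``we recall here the above-mentioned result so that we can follow the computations closely'') and then applied, so there is no internal proof to compare against. Your outline follows the standard argument behind this type of result --- replace $\I_{\A_n}$ by a H\"older mollifier at scale $\rho$, feed the smoothed pair into \eqref{eq:Holder-DC-1}, control the replacement error via H\"older's inequality and the $L^{1+\epsilon}$ bound on the density, then optimise in $\rho$ --- and this is indeed how such propositions are proved.

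Two points where your sketch is loose and would need care in a full write-up. First, the claim ``$u_n\sim n^{-1/d}$ forced by \eqref{eq:un}'' is not a consequence of the stated hypotheses: with only $h\in L^{1+\epsilon}$, H\"older gives a one-sided bound on the radius of $U(u_n)$, not an asymptotic. The Remark immediately following the proposition in the paper flags exactly this, noting that the original proof takes $\p(\A_n)\sim_c 1/n$ as an input rather than deriving the radius scale from \eqref{eq:un}. Second, your geometric interpretation of the $\max$ in the exponent (coarse bound $\rho^d$ versus transverse bound $r_n^{d-1}\rho$) does not quite reproduce the stated exponent in dimensions $d\ge 2$ if you plug in $r_n\sim n^{-1/d}$; the bookkeeping that actually yields $1+\max\{0,(1+\epsilon)/\epsilon-d\}$ is a bit more delicate than the dichotomy you describe. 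In the one-dimensional application that the paper actually needs, however, $\partial\A_n$ is a finite set of points, $\Leb(B_\rho(\partial\A_n))\le C\rho$, and your computation gives the correct exponent $1+\beta\big((1+\epsilon)/\epsilon+\delta\big)$ directly.
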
  

\begin{remark}
We note that during the proof, in order to obtain the condition on the rate of decay of correlations, it is assumed that $\p(\A_n)\sim_{c} 1/n$.  
\end{remark}

Observe that since we are working in dimension $1$, which means $d=1$, then $\max\{0,(\epsilon+1)/\epsilon-d\}=1/\epsilon$. Also, from \cite{H04}, we may assume that the decay of correlations is written for Lipschitz functions, which allows us to take $\beta=1$. Hence, for condition $\D_q(u_n)$ hold, we need that the rate of decay of correlations $\varrho$ is sufficiently fast so that there exists some $\delta>0$ such that
\begin{equation}
\label{eq:D-estimate}
\lim_{n\to\infty}n^{2+1/\epsilon+\delta}\varrho(t_n)=0,
\end{equation}
where $t_n=o(n)$. From \eqref{eq:estimate-measure}, in order that the density $h_\alpha\in L^{1+\epsilon}$, we need that $\epsilon<1/\alpha-1$. Since by \eqref{eq:Holder-DC}, we have that $\varrho(t)=t^{-(1/\alpha-1)}$, then by \eqref{eq:D-estimate} it is obvious that we must have $\alpha<1/2$, which implies that $1/\epsilon<\alpha+2\alpha^2$. Taking $t_n=n^{1-\alpha}$, we obtain:
$$
n^{2+1/\epsilon+\delta}\left(n^{1-\alpha}\right)^{-(1/\alpha-1)}=n^{2+1/\epsilon+\delta}n^{-1/\alpha+2-\alpha}<n^{4+2\alpha^2+\delta-1/\alpha}.
$$
Hence, if $\alpha<\sqrt5-2$ we can always find $\delta>0$ so that \eqref{eq:D-estimate} holds and consequently condition $\D_q(u_n)$ is verified.

\subsubsection{Linear normalisation and extremal types limiting laws}\label{ssec:shapes}
As discussed in Remark~\ref{rmk:un}, in classical extreme value theory (independent case), typically, the limiting laws are obtained for linear normalising sequences $(a_n)_{n\in\N}, (b_n)_{n\in\N}$, with $a_n>0$ so that
$$
\lim_{n\to\infty}\p(a_n(M_n-b_n)\leq y)=\e^{-\tau(y)},
$$
where $\tau(y)$ is of one of three types specified in \cite[Equation (2.8)]{F13}, where the tail of the distribution function of $X_0$ determines the type of limit. Note that $u_n=y/a_n+b_n$ and equation \eqref{eq:un} imply that in the usual applications to dynamical systems, when no clustering occurs, the type of limit law is determined by the shape of $\varphi$ near $\zeta$ as well as the shape of the invariant density at that point. 

We note that the stochastic process defined as in \eqref{eq:def-SP}, with $\zeta=0$, may or may not have the same limiting extremal distribution  (under linear normalisation) as the corresponding independent sequence, i.e., consider $Z_0, Z_1,\ldots$ independent and identically distributed (i.i.d.) and such that $Z_0$ as the same distribution function as $X_0$.

To illustrate the possibility of having different extremal limiting distributions, we give an example of $\phi$ for which $\p(a_n(M_n-b_n)\leq y)\to H(y)$ and $\p(a^*_n(M^*_n-b^*_n)\leq y)\to H^*(y)$ but $H(y)\neq H^*(y)$, where $M_n^*=\max\{Z_0,\ldots, Z_{n-1}\}$. 

Let $\varphi=1-x$. Then $\varphi^{-1}(x)=1-x$. Using \eqref{eq:new-un} to define $u_n$, having in mind \eqref{eq:Qn-estimate} and recalling that $Q_n=[x_n,y_n)$, where $y_n=\varphi^{-1}(u_n)$ and $x_n<y_n$ is such that $f_\alpha(x_n)=y_n$, we see that for a well chosen constant $c>0$, we can take $u_n=1-c\tau/n$. Using Theorem~\ref{thm:zero} and writing $y=-\tau$, we get 
$$
\lim_{n\to\infty}\mu_{\alpha}(M_n\leq 1-c\tau/n)=\e^{-\tau}\Longleftrightarrow \lim_{n\to\infty}\mu_{\alpha}\left(\frac{n}{c}(M_n-1)\leq y\right)=\e^{-(-y)},
$$
which means we have Weibull limiting distribution with exponent equal to $1$ and normalising sequences given by $a_n=\frac{n}{c}$ and $b_n=1$.

If we are now to determine the limiting law for the sequence the i.i.d. sequence $Z_0, Z_1,\ldots$, we have to use equation \eqref{eq:un} to define $u_n$. Now since $U_n=\{X_0>u_n\}=[0,y_n)$, where $y_n=\varphi^{-1}(u_n)$, then by \eqref{eq:Un-1-estimate}, for a well chosen $c^*>0$, we can take $u_n=1-\left(\frac{c^*\tau}{n}\right)^{1/(1-\alpha)}$. Then using \cite[Theorem~1.5.1]{LLR83} and letting $y=-\tau^{1/(1-\alpha)}$, we would get
$$
\lim_{n\to\infty}\P\left(M^*_n\leq 1-\left(\frac{c^*\tau}{n}\right)^{1/(1-\alpha)}\right)=\e^{-\tau}\Leftrightarrow \lim_{n\to\infty}\p\left(\left(\frac{n}{c^*}\right)^{1/(1-\alpha)}(M^*_n-1)\leq y\right)=\e^{-(-y)^{1-\alpha}},
$$
which means we have Weibull limiting distribution with exponent equal to $1-\alpha$ and normalising sequences given by $a_n=\left(\frac{n}{c^*}\right)^{1/(1-\alpha)}$ and $b_n=1$.

The same computations would allow us to verify that if $\varphi(x)=-\log(x)$ then both the extremal limiting distributions under linear normalisation for both $M_n$ and $M_n^*$ would be the Gumbel distribution, \ie $H(y)=H^*(y)=\e^{-\e^{-y}}$.

\bibliographystyle{amsalpha}
\bibliography{jorge_dich2}

\end{document}